\newtheorem{theorem}{Theorem}
\newtheorem*{theorem*}{Theorem}
\newtheorem{lemma}{Lemma}
\newtheorem{corollary}{Corollary}
\theoremstyle{remark}
\newtheorem{remark}{Remark}
\newcommand{\spec}{\text{Spec}}
\newcommand{\ad}{\text{ad}}
\newcommand{\gr}{\text{gr}}
\newcommand{\der}{\text{Der}}
\title{Derivations of quantizations in characteristic $p$}
\author{Akaki Tikaradze}
\email{ tikar06@gmail.com}
\address{University of Toledo, Department of Mathematics \& Statistics, 
Toledo, OH 43606, USA}
\date{\today}
\begin{document}

\begin{abstract}
 Let $\bf{k}$ be an algebraically closed field of odd characteristic.  We describe derivations of 
 a large class of quantizations of affine normal Poisson
 varieties over $\bf{k}.$
\end{abstract}
\maketitle


Let $\bf{k}$ be an algebraically closed field of characteristic $p>2.$ 
Let $A$ be an associative $\bf{k}$-algebra and let
$Z$ be its center. Then we have the natural restriction map 
$HH^1(A)\to \der_{\bold{k}}(Z, Z)$ from the first Hochschild cohomology of $A$ over $\bf{k}$
to $\bf{k}$-derivations of $Z.$ In this note we show that this map is injective for
a large class of quantizations of Poisson algebras (Theorem \ref{main}) and is an isomorphism
for central quotients of the enveloping algebras of semi-simple Lie algebras (Corollary \ref{qow}).
 It is well-know that this map is an
isomorphism if $A$ is an Azumaya algebra over $Z.$ In fact in this case all corresponding Hochschild cohomology
groups are isomorphic $HH^*(A)\cong HH^*(Z).$

Throughout given an element $x\in A,$ by $\ad(x)$ we will denote the commutator bracket \\$[x, -]:A\to A$ as it
is customary.  Thus we have an injective homomorphism of $Z$-modules $A/Z\overset{\ad}\rightarrow \der_{\bold{k}}(A, A).$
We have a short exact sequence of $Z$-modules $$0\to A/Z\overset{\ad}\rightarrow \der_{\bf{k}}(A, A)\to HH^1(A)\to 0.$$
We will be interested in determining whether this sequence splits. We will start by recalling how a deformation of
$A$ over $W_2(\bf{k})$ gives rise to a Poisson bracket on $Z,$ where $W_2(\bf{k})$ denotes the ring of Witt vectors of length $2$ over
$\bf{k}.$ Hence $W_2(\bf{k})$ is a free $\mathbb{Z}/p^2\mathbb{Z}$-module \\and $W_2(\bold{k})/pW_2(\bold{k})=\bold{k}.$

  Let $A_2$ be a lift of $A$ over $W_2(\bf{k}).$ Thus $A_2$ is an associative $W_2(\bf{k})$-algebra which is free as a $W_2(\bf{k})$-module and $A_2/pA_2=A.$ Then we have a derivation  $i:Z\to HH^1(A)$ defined as follows. For $z\in Z$, let $\tilde{z}\in A_2$ be a lift of $ z.$ Then 
$\ad(\tilde{z})(A_2)\subset pA_2.$ Hence $$i(z)=(\frac{1}{p}\ad(\tilde{z}))\mod  p: A\to A$$ is a derivation 
which is independent of a lift of $z \mod$ inner derivations. The map $i$ restricted on $Z$ gives rise to 
the Poisson bracket $\lbrace  , \rbrace :Z\times Z\to Z,$
which we will refer to as the deformation Poisson bracket on $Z.$

Then we have the following

\newpage

\begin{lemma}\label{L1}
Let $A$ be an associative $\bf{k}$-algebra, and let $A_2$ be its lift over $W_2(\bf{k})$. 
Let $Z$ be the center of $A.$ Assume that $Z$ admits a lift as a subalgebra of $A_2.$  Assume that 
$\spec Z$ is a normal variety such that the deformation Poisson bracket on $Z$ is
symplectic on the smooth locus of $\spec Z$, and $A$ is a finitely generated Cohen-Macaulay $Z$-module
such that $Ann_ZA=0.$
 Then the restriction map $\der_{\bold{k}}(A, A)\to \der_{\bold{k}}(Z, Z)$ admits a $Z$-module splitting.

\end{lemma}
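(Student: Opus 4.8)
The plan is to build the splitting from a canonical \emph{honest} lift of Hamiltonian derivations supplied by the subalgebra lift $Z_2\subset A_2$, to repackage that lift through the universal property of Kähler differentials, and then to transport it across the restriction map using the symplectic structure on the smooth locus together with the Cohen--Macaulay hypothesis.

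First, for $z\in Z$ I would choose a lift $\tilde z\in Z_2$ and set $D_z=(\tfrac1p\ad(\tilde z))\bmod p\in\der_{\mathbf k}(A,A)$. Since $\tilde z$ reduces to the central element $z$, one has $\ad(\tilde z)(A_2)\subseteq pA_2$, so $D_z$ is a genuine derivation; and any two lifts in $Z_2$ differ by an element of $pZ_2$, which changes $D_z$ by $\ad$ of a central element, hence by $0$. Thus $z\mapsto D_z$ is well defined as an honest derivation, \emph{not merely modulo inner derivations} --- this is precisely where the hypothesis that $Z$ lifts to a subalgebra of $A_2$ is used. Using commutativity of $Z_2$ and the Leibniz rule for $\ad$, I would check $D_{zw}=zD_w+wD_z$, so $D\colon Z\to\der_{\mathbf k}(A,A)$ is a $\mathbf k$-derivation into the $Z$-module $\der_{\mathbf k}(A,A)$. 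By the universal property of $\Omega^1_{Z/\mathbf k}$ it factors as $D=\hat D\circ d$ for a unique $Z$-linear map $\hat D\colon\Omega^1_{Z/\mathbf k}\to\der_{\mathbf k}(A,A)$. Writing $r$ for the restriction map and $\mu\colon\Omega^1_{Z/\mathbf k}\to\der_{\mathbf k}(Z,Z)$ for the anchor of the deformation Poisson bracket (so $\mu(dz)=\{z,-\}$), the identity $r(D_z)=D_z|_Z=\{z,-\}=\mu(dz)$ on generators gives $r\circ\hat D=\mu$.

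Next I would pass to the smooth locus $U\subset\spec Z$, whose complement has codimension $\ge 2$ since $Z$ is normal. On $U$ the deformation bracket is symplectic, so nondegeneracy makes the anchor restrict to an isomorphism $\mu_U\colon\Omega^1_U\xrightarrow{\sim}\der_{\mathbf k}(\mathcal O_U,\mathcal O_U)$. Sheafifying $\hat D$ and composing, $\hat D_U\circ\mu_U^{-1}$ is an $\mathcal O_U$-linear section of the restriction map over $U$. The global splitting is then obtained by taking sections over $U$: because $\der_{\mathbf k}(Z,Z)=\operatorname{Hom}_Z(\Omega^1_{Z/\mathbf k},Z)$ is reflexive, restriction identifies it with the derivations of $\mathcal O_U$; while the Cohen--Macaulay hypothesis together with $\mathrm{Ann}_ZA=0$ forces $A$ to be a maximal Cohen--Macaulay, in particular $S_2$, $Z$-module, so $A=\Gamma(U,\tilde A)$ for the coherent sheaf $\tilde A$ on $\spec Z$, and every derivation of $\tilde A|_U$ acts on $A$. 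Setting $s=\Gamma(U,\hat D_U\circ\mu_U^{-1})$ yields a $Z$-linear map $\der_{\mathbf k}(Z,Z)\to\der_{\mathbf k}(A,A)$, and $r\circ s=\mathrm{id}$ holds on $U$ by $r\circ\hat D=\mu$, hence everywhere by reflexivity (separatedness) of $\der_{\mathbf k}(Z,Z)$.

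The main obstacle is the very first step: verifying that the subalgebra lift promotes $D_z$ to an honest derivation rather than only a class in $HH^1(A)$. Without this one could split only $HH^1(A)\to\der_{\mathbf k}(Z,Z)$, not the restriction map on $\der_{\mathbf k}(A,A)$ itself, since the inner-derivation ambiguity obstructs lifting the quotient section back to $\der_{\mathbf k}(A,A)$. The remaining delicate points --- that $\mu$ is an isomorphism exactly on the smooth locus, and that derivations extend across the codimension-$\ge 2$ complement, where normality and Cohen--Macaulayness enter --- I expect to be routine given the stated hypotheses.
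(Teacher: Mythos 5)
Your proposal is correct and follows essentially the same route as the paper's own proof: use the subalgebra lift $Z_2\subset A_2$ to produce honest derivations $D_z=(\tfrac1p\ad(\tilde z))\bmod p$, factor the resulting map through $\Omega^1_{Z/\mathbf{k}}$, invert the symplectic anchor on the smooth locus $U$, and recover a global $Z$-linear section via $\Gamma(U,T_U)=\der_{\mathbf{k}}(Z,Z)$ (normality, codimension $\ge 2$) and $\Gamma(U,\tilde A)=A$ (maximal Cohen--Macaulayness). Your write-up is in fact more explicit than the paper's on the well-definedness of $z\mapsto D_z$ and on the universal property of K\"ahler differentials, but the underlying argument is identical.
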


\begin{proof}

Let $\tilde{Z}\subset \tilde{A}$ be an algebra lift of $Z$ over $W_2(\bold{k}).$
 Thus $\tilde{Z}$ is a subalgebra of $\tilde{A}$ free over $W_2(\bold{k})$ such that $\tilde{Z}/p\tilde{Z}=Z.$ 
Then we have a map $(\frac{1}{p}\ad)\mod p:\tilde{Z}\to \der_{\bf{k}}(A, A).$ This map clearly factors through a derivation
$\tilde{Z}/p\tilde{Z}=Z\to \der_{\bf{k}}(A, A)$ and is a lift of the map $i:Z\to HH^1(A)$ described above. 
Let $U$ be the smooth locus of $\spec Z$. Thus by the assumption the deformation Poisson bracket of $\spec Z$ is symplectic on $U$.
We have the map of coherent sheaves $\tilde{i}|_U:\Omega^1_{U}\to \der_{\bold{k}}(A,A)|_{U},$
and composing it with the identification by the symplectic form between tangent and cotangent bundles $T^1_U\to \Omega^1_U,$ we get
a map of coherent sheaves on $U,$ $\tau:T_U\to \der(A, A)|_U.$
 Since $codim(\spec Z\setminus U)\geq 2$ and $\spec Z$ is a normal variety, then 
$\Gamma(U, T_{U})=\der_{\bf{k}}(Z, Z).$ Also $\Gamma(U, A_{U})=A$ since $A$ is a Cohen-Macaulay $Z$-module
of dimension $\dim Z.$ Thus we get a map of $Z$-modules $\tau:\der_{\bold{k}}(Z, Z)\to \der_{\bold{k}}(A, A)$
 which is a section of the restriction map $\der_{\bold{k}}(A, A)\to \der_{\bold{k}}(Z, Z).$

\end{proof}








Next we have the following

\begin{lemma}\label{L2}
Let $A$ be an associative $\bf{k}$-algebra which is a finite over its center $Z$.
Assume that $Z$ is a normal $\bf{k}$-domain such that $A/Z$ is a Cohen-Macaulay module over $Z,$
and $Ann_Z(A/Z)=0.$ 
 Assume moreover that the Azumaya locus of $A$ has a compliment of codimension $\geq 2$ in $\spec Z.$
Then the restriction map $HH^{1}(A)\to \der_{\bold{k}}(Z, Z)$ is injective.
\end{lemma}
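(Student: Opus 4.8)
The plan is to identify the kernel of the restriction map explicitly and show it vanishes. Writing $HH^1(A)=\der_{\bf{k}}(A,A)/\ad(A/Z)$ as in the exact sequence above, a class lies in the kernel of $HH^1(A)\to\der_{\bf{k}}(Z,Z)$ precisely when it is represented by a derivation $D$ with $D|_Z=0$; such a $D$ is automatically $Z$-linear (by the Leibniz rule and centrality of $Z$), so the kernel is $\der_Z(A,A)/\ad(A/Z)$, where $\der_Z(A,A)$ denotes the module of $Z$-linear derivations, which contains $\ad(A/Z)$ since inner derivations are $Z$-linear. Thus the Lemma is equivalent to the assertion that every $Z$-linear derivation of $A$ is inner, i.e. that $\ad\colon A/Z\to\der_Z(A,A)$ is surjective (it is injective by hypothesis). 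Since $A$ is finite over the Noetherian domain $Z$, both $A/Z$ and $\der_Z(A,A)\subseteq\operatorname{Hom}_Z(A,A)$ are coherent $Z$-modules and $\der_Z(A,A)$ commutes with localization, so I may pass freely to the associated coherent sheaves on $\spec Z$.

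First I would analyze the situation over the Azumaya locus $V$, whose complement $W$ has codimension $\geq 2$ by hypothesis. Over $V$ the algebra $A$ is Azumaya, hence separable over $Z$, so its $Z$-linear Hochschild cohomology vanishes and every $Z$-linear derivation is inner; combined with injectivity of $\ad$ this gives an isomorphism of sheaves $\ad\colon (A/Z)|_V \xrightarrow{\sim}\der_Z(A,A)|_V$.

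Next I would globalize using the Cohen--Macaulay hypothesis. Because $\operatorname{Ann}_Z(A/Z)=0$ and $Z$ is a domain, $A/Z$ has full support and is Cohen--Macaulay of dimension $\dim Z$; as $W$ has codimension $\geq 2$, the same extension-of-sections argument used in Lemma \ref{L1} shows that the restriction $A/Z\to\Gamma(V,(A/Z)|_V)$ is an isomorphism. Composing the restriction map $\der_Z(A,A)\to\Gamma(V,\der_Z(A,A)|_V)$ with the inverse of the isomorphism $\Gamma(V,\ad|_V)$ and this identification then produces a $Z$-linear map $\psi\colon\der_Z(A,A)\to A/Z$ with $\psi\circ\ad=\mathrm{id}$. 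Hence $\ad$ is split injective, and surjectivity becomes equivalent to the injectivity of $\der_Z(A,A)\to\Gamma(V,\der_Z(A,A)|_V)$, i.e. to the vanishing of the submodule of sections supported on $W$.

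Finally I would rule out such sections by torsion-freeness. A full-dimensional Cohen--Macaulay module over the domain $Z$ has all associated primes minimal of top dimension, hence is torsion-free; so $A/Z$ is torsion-free, and from $0\to Z\to A\to A/Z\to 0$ it follows that $A$, and therefore $\operatorname{Hom}_Z(A,A)$ together with its submodule $\der_Z(A,A)$, is torsion-free over $Z$. A torsion-free module over a domain has no nonzero sections supported on the proper closed set $W$, so the restriction is injective, $\psi$ is an isomorphism, and $\der_Z(A,A)=\ad(A/Z)$, whence the kernel vanishes. I expect the main obstacle to be precisely this globalization: promoting ``inner on the Azumaya locus'' to ``inner on all of $\spec Z$'' requires the interplay of the Cohen--Macaulay property (to extend the conjugating element across $W$) and torsion-freeness (to certify that the extended inner derivation actually coincides with $D$), with the codimension $\geq 2$ hypothesis feeding both halves.
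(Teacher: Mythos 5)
Your proof is correct and follows essentially the same route as the paper's: use the Azumaya property on the good locus to write the derivation as $\ad(x)$ for a section $x$ of $(A/Z)|_V$, extend $x$ to a global element of $A/Z$ via the Cohen--Macaulay, codimension $\geq 2$ local cohomology argument, and conclude that the difference, which vanishes on $V$, is zero. The only (harmless) divergence is in that last step: the paper gets the needed injectivity of $A\to\Gamma(V,A|_V)$ from normality of $Z$ (depth $\geq 2$ along the complement), whereas you derive it from torsion-freeness of $A$ over the domain $Z$, which is a valid and in fact slightly more economical argument.
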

\begin{proof}

Let $D:A\to A$ be a $\bf{k}$-derivation
such that $D(Z)=0.$ Let $U$ be the Azumaya locus of $A.$ Put $Y=\spec Z\setminus U.$ Since $A|_{U}$ is Azumaya algebra, it follows that 
there exists $x\in \Gamma (U, A/Z)$ such that $D|_{U}$ is equal to
$\ad(x).$ Since $A/Z$ is Cohen-Macaulay module over $Z$ and $Ann_Z(A/Z)=0,$ it follows that
$depth_Y(A/Z)\geq 2.$ Thus the standard argument using local cohomology
groups \cite{H} implies that $\Gamma(U, A/Z)=A/Z.$ It follows that there exists
$x\in A$ such that $D-\ad(x)x$ vanishes on $U.$ Since $Z$ is normal, it follows that $depth_YA\geq 2$. Hence
$\Gamma(U, A)=A.$ Therefore $D-\ad(x)=0,$ hence $D=\ad(x)$ is an inner derivation.
We conclude that $HH^1(A)\to \der_{\bold{k}}(Z, Z)$ is injective as desired. 
\end{proof}







Let  an associative $\bf{k}$-algebra $A$ be equipped with an algebra filtration
 $1\in A_0\subset A_1\subset \cdots$
such that the associated graded algebra $\gr A=\bigoplus_n A_n/A_{n-1}$ is commutative. Then recall that there is
a graded Poisson bracket on $\gr A$ defined as follows. Given $x\in A_{n}/A_{n-1}, y\in A_{m}/A_{m-1}$, then
their Poisson bracket $\lbrace x, y\rbrace$ is defined to be $[\tilde{x}, \tilde{y}]\in A_{n+m-1}/A_{n+m-2},$
 where $\tilde{x}\in A_n,\tilde{y}\in A_m$
are lifts of $x, y.$ In this setting we say that a filtered algebra $A$ as a quantization of a
graded Poisson algebra $grA.$
This is closely related to deformation quantizations: By taking $\tilde{A}$ to be ($h$-completion of)
the Rees algebra of $A: R(A)=\bigoplus_{n} A_n\otimes h^n$, then $\tilde{A}/h\tilde{A}=\gr A.$

We will need the following computation which relates the deformation Poisson bracket on $Z$ to
the Poisson bracket on $\gr A.$ This computation is similar and motivated by a result of
Kanel-Belov and Kontsevich \cite{KK},
where the the Poisson bracket on $Z$ was computed when $A$ is the Weyl algebra.

\begin{lemma}\label{bracket}

Let $A$ be a filtered $W_2(\bf{k})$-algebra, such that $\gr A=B$ is commutative and free over $W_2(\bold{k})$.  Put
$\overline{A}=A/pA, \overline{B}=B/pB.$ Let $\overline{Z}$ denote the center of  $\overline{A}.$ Assume
that $gr(\overline{Z})=\overline{B}^p.$ Then the top degree part of the deformation Poisson bracket on $\overline{Z}$
is equal to -1 times the Poisson bracket of $\overline{B}.$

\end{lemma}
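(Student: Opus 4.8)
The plan is to reduce the statement to a leading-symbol computation and then carry out an explicit normal-ordering calculation modelled on the Weyl-algebra case of \cite{KK}. First observe that the deformation Poisson bracket on $\overline{Z}$ is filtered and lowers filtration degree by $p$ (a central element of symbol $\alpha^p$ has degree $p\deg\alpha$), so it induces a graded bracket on $\gr\overline{Z}=\overline{B}^p$; being the associated graded of a Poisson bracket, this induced bracket is a biderivation, hence determined by its values on pairs of $p$-th powers $\alpha^p,\beta^p$ with $\alpha\in\overline{B}_n,\ \beta\in\overline{B}_m$ homogeneous. Fix such $\alpha,\beta$, choose lifts $a\in A_n,\ b\in A_m$, and choose central elements $z,w\in\overline{Z}$ with $\sigma(z)=\alpha^p,\ \sigma(w)=\beta^p$, together with lifts $\tilde z,\tilde w\in A$. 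As recalled before Lemma~\ref{L1}, the element $\{z,w\}=\frac1p[\tilde z,\tilde w]\bmod p$ lies in $\overline{Z}$ and is independent of the chosen lifts; the task is to identify its top symbol in degree $p(n+m-1)$ inside $\overline{B}^p$.

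The computational heart is to show $\tfrac1p[a^p,b^p]\equiv -c^p\pmod p$ up to terms of strictly lower filtration degree, where $c=[a,b]$ satisfies $\sigma(\bar c)=\{\alpha,\beta\}$ in degree $n+m-1$. I would first work in the Heisenberg model, treating $c$ as central. Then $[a^p,b]=\sum_{i=0}^{p-1}a^i c\,a^{p-1-i}=p\,a^{p-1}c$, whence $[a^p,b^p]=\sum_{j=0}^{p-1}b^j[a^p,b]b^{p-1-j}=p\,c\sum_{j=0}^{p-1}b^j a^{p-1}b^{p-1-j}$. Normal-ordering gives $b^j a^{p-1}=\sum_{k}\binom{j}{k}\frac{(p-1)!}{(p-1-k)!}(-c)^k a^{p-1-k}b^{j-k}$, so $b^j a^{p-1}b^{p-1-j}=\sum_k\binom{j}{k}\frac{(p-1)!}{(p-1-k)!}(-c)^k a^{p-1-k}b^{p-1-k}$; summing over $j$ introduces $\sum_{j=0}^{p-1}\binom{j}{k}=\binom{p}{k+1}$ by the hockey-stick identity. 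Modulo $p$ every $\binom{p}{k+1}$ with $0\le k\le p-2$ vanishes, so only the fully contracted term $k=p-1$ survives, contributing $(-c)^{p-1}(p-1)!\equiv -c^{p-1}$ by Wilson's theorem $(p-1)!\equiv -1$ together with $(-1)^{p-1}=1$. Hence $\tfrac1p[a^p,b^p]\equiv c\cdot(-c^{p-1})=-c^p\pmod p$, whose symbol is $-\{\alpha,\beta\}^p=(-\{\alpha,\beta\})^p$, the last equality using that $p$ is odd.

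Two reductions must then be justified, and this bookkeeping is the main obstacle. First, $c=[a,b]$ is not actually central: each appeal to its centrality conceals a correction governed by $[c,a]$ or $[c,b]$, whose symbols $\{\{\alpha,\beta\},\alpha\}$ and $\{\{\alpha,\beta\},\beta\}$ lie in degree $n+m-2$, strictly below $n+m-1$ because the Poisson bracket of $\overline{B}$ has degree $-1$; every such correction therefore feeds into strictly lower filtration degree and cannot disturb the symbol in the top degree $p(n+m-1)$. Second, the central elements $z,w$ differ from the pure powers $a^p,b^p$ by terms of strictly lower order, and $\tilde z,\tilde w$ are determined only modulo $pA$; using that $\tfrac1p[\tilde z,\tilde w]\bmod p$ is nevertheless a well-defined \emph{central} element together with the degree $-1$ estimate above, one checks that these discrepancies likewise contribute only below degree $p(n+m-1)$, so the top symbol is computed by the model calculation. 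The careful verification that all of these corrections genuinely drop the filtration degree is the delicate technical point. Granting it, the induced graded bracket on $\gr\overline{Z}=\overline{B}^p$ sends $(\alpha^p,\beta^p)\mapsto(-\{\alpha,\beta\})^p$; after identifying $\overline{B}$ with $\overline{B}^p$ via the Frobenius $\beta\mapsto\beta^p$, this is precisely $-1$ times the Poisson bracket of $\overline{B}$, as claimed. As a consistency check, for the Weyl algebra $c=[x,\partial]$ is a constant and the formula recovers $\tfrac1p[x^p,\partial^p]\equiv 1$, in agreement with \cite{KK}.
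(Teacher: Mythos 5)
Your Heisenberg-model computation is correct (the normal-ordering formula, the hockey-stick identity, and Wilson's theorem do give $[a^p,b^p]=-pc^p$ exactly when $c=[a,b]$ is central), and your reduction to values on pairs of $p$-th powers, plus the use of centrality of $\frac1p[\tilde z,\tilde w]\bmod p$ to exclude symbol degrees not divisible by $p$, are sound. The gap is in the step you yourself flag as delicate: the claim that all corrections land strictly below degree $p(n+m-1)$. That degree count is wrong, because dividing by $p$ does \emph{not} lower filtration degree (unlike dividing by the deformation parameter $h$ in the Rees model, $p$ carries filtration degree $0$). The main term $-pc^p$ carries an explicit factor of $p$ and its element part sits in degree $p(n+m-1)=p(n+m)-p$, while each correction carries one extra commutator (a degree drop of only $1$) and no factor of $p$; so after the division by $p$ the corrections occupy degrees up to $p(n+m)-2$, which for $p>2$ is \emph{strictly above} $p(n+m-1)$. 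Centrality of the result kills only the components in degrees not divisible by $p$, i.e.\ those strictly between $p(n+m-1)$ and $p(n+m)$; it gives no control whatsoever over the component in degree exactly $p(n+m-1)$, which is precisely where the answer lives.

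Moreover this contamination is real, not hypothetical: as in the paper's proof, the genuine central lift of $\alpha^p$ has the form $a^p-h^{p-1}a_{[p]}$, where $\ad_P(a)^p=\ad_P(a_{[p]})$ is the restricted structure, and two of the terms you discard contribute \emph{exactly} in degree $p(n+m-1)$. The iterated non-centrality corrections inside $\ad(a)^p(b^p)$ produce $b^{p-1}\lbrace a_{[p]},b\rbrace$ (this is the content of the identity $D^{p-1}(b^{p-1}D(b))=(p-1)!D(b)^p+b^{p-1}D^p(b)$ used in the paper), while the cross term $-h^{p-1}[a_{[p]},b^p]$ between the correction part of $\tilde z$ and $b^p$ produces $-b^{p-1}\lbrace a_{[p]},b\rbrace$. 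The lemma holds because these two top-degree terms cancel; your argument discards both and arrives at the right answer only because the discarded terms happen to cancel each other. In the Heisenberg model both are invisible (there $a_{[p]}$ may be taken to be zero, since $\ad(a)^2=0$), which is why the model computation cannot detect the problem; in a Lie--Poisson example such as $\lbrace e,f\rbrace=e$ one has $f_{[p]}=f$ and $e^{p-1}\lbrace f_{[p]},e\rbrace=-e^p\neq 0$, so these terms do not individually vanish. To close the gap you must do what the paper does: pass to a model in which filtration degree is tracked by an explicit central parameter (the Rees/deformation picture), write down the exact central elements $a^p-h^{p-1}a_{[p]}$, and compute $[x,y]$ with them mod $h^{p+1}$, so that every error term carries an explicit $h^{p+1}$ and the top-degree cancellation is exhibited rather than asserted.
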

\begin{proof}

	We will verify that given central elements $\bar{x}, \bar{y}\in \bar{Z}$ such that
	 $\gr (\bar{x})=\bar{a}^p, \gr (\bar{y})=\bar{b}^p, \bar{a}, \bar{b}\in \overline{B},$ then $\gr([x, y])=p\lbrace \bar{a}, \bar{b}\rbrace.$
Here $x, y \in A$ are lifts of $\bar{x}, \bar{y}$ respectively.

It will be more convenient to work in the deformation quantization setting. Thus we will assume that $A=B[[h]]$ as a a free $W_2[k][[h]]$-module
such that  $$A/hA=B, [a, b]=h\lbrace a, b\rbrace \mod h^2,  a, b\in B.$$
Then by our assumption $\overline{Z}=\lbrace \bar{a}^p-h^{p-1}\bar{a}_{[p]}, \bar{a}\in \overline{B}\rbrace.$ 
Thus $\ad_P(\bar{a})^p=ad_P(\bar{a}_{[p]})$, here $\ad_P(x)$ denotes the Poisson bracket $\lbrace x, -\rbrace, x\in B.$
We will compute the Poisson bracket on $\overline{Z} \mod h^{p+1}$. Thus
without loss of
generality we will put $h^{p+1}=0.$ Let $x=a^p-h^{p-1}a_{[p]}, y=b^p-h^{p-1}b_{[p]}.$ We want to compute $[x, y].$ We have
$$[a^p, y]=\ad(a)^p(y)-\sum_{i=1}^{p-1}(-1)^i{p\choose i }a^iya^{p-i}.$$ Since ${p\choose i}y$ is in the center of $A$, we have that
$$\sum_{i=1}^{p-1}(-1)^i{p\choose i}a^iya^{p-i}=\left(\sum_{i=1}^{p-1}(-1)^i{p\choose i}\right)a^py=0.$$
So $[a^p, y]=\ad(a)^p(y).$ We have $$\ad(a)^p(y)=\ad(a)^p(b^p)-h^{p-1}\ad(a)^p(b_{[p]}).$$ But $\ad(a)^p(a)\subset h^pA$
thus $h^{p-1}\ad(a)(A)=0.$ So $\ad(a)^p(y)=\ad(a)^p(b^p).$ On the other hand $$[h^{p-1}a_{[p]}, y]=h^{p-1}[a_{[p]}, b^p].$$
Now since 
$$\ad(a)^p(b^p)=ph^{p}\ad_P(a)(b^{p-1}\lbrace a, b\rbrace),\quad h^{p-1}[a_{[p]}, b^p]=ph^{p}\lbrace a_{[p]}, b\rbrace b^{p-1},$$ 
we obtain that 
$$[x, y]=ph^{p}\left(\ad_P(a)^{p-1}(b^{p-1}\lbrace a, b\rbrace)-\lbrace a_{[p]}, b\rbrace b^{p-1}\right)=(p-1)!\lbrace a, b\rbrace^p.$$

Here we used the following identity. Let $D:B\to B$ be a derivation, then $$D^{p-1}(b^{p-1}D(b))=(p-1)!D(b)^p+b^{p-1}D^P(b), b\in B.$$

\end{proof}

Recall that a reduced commutative $\bf{k}$-algebra $B$ is said to be Frobenius split if the quotient map
$B\to B/B^p$ splits as a $B^p$-module homomorphism.

\begin{theorem}\label{main}
Let  $\spec B$ be a normal Frobenius split Cohen-Macaulay Poisson variety over
$\bf{k}$ such that the Poisson bracket on the smooth locus of $\spec B$ is symplectic.
Let $A$ be a  quantization of $B$ such that $\gr Z=B^p,$ where $Z$ is the center of $A.$ Moreover, assume that
$A$ admits a lift to $W_2(\bf{k}).$ Let $U$ denote the smooth locus of $\spec Z.$
Then the restriction map $HH^1_{\bold{k}}(A)\to\der_{\bold{k}}(Z,Z)$ is injective and its cokernel
is a quotient of $\Omega^1(U)/\Omega^1_Z$ as a $Z$-module.

\end{theorem}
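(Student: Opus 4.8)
The plan is to obtain the injectivity from Lemma \ref{L2} and to compute the cokernel directly using the deformation derivations introduced just before Lemma \ref{L1}; Lemma \ref{L1} itself is \emph{not} invoked, since we are not given a lift of $Z$ as a subalgebra of the $W_2(\bold{k})$-lift, and this missing datum is exactly what the cokernel measures. First I would translate the hypotheses on $B$ into the module-theoretic hypotheses of Lemma \ref{L2} by passing to associated graded objects. Since the Frobenius $B\to B^p$ is a ring isomorphism, $\gr Z=B^p$ is a normal Cohen--Macaulay domain, and because both normality and Cohen--Macaulayness ascend from $\gr$ to a filtered algebra, $Z$ is a normal Cohen--Macaulay $\bold{k}$-domain and $A$ is finite over $Z$. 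The induced filtration gives $\gr(A/Z)=B/B^p$; the Frobenius splitting realizes $B/B^p$ as a $B^p$-direct summand of the Cohen--Macaulay module $B$, hence $\gr(A/Z)$ is Cohen--Macaulay over $B^p$ and so $A/Z$ is Cohen--Macaulay over $Z$. A short conductor computation shows $Ann_{B^p}(B/B^p)=0$ (if $0\ne f\in B^p$ killed $B/B^p$, every element of $B$ would become a $p$-th power in $\mathrm{Frac}(B)$, forcing $\mathrm{Frac}(B)$ to be perfect), and this passes through $\gr$ to give $Ann_Z(A/Z)=0$. Finally Lemma \ref{bracket} identifies the top-degree part of the deformation bracket on $Z$ with $-1$ times the bracket of $B$, so the deformation bracket is symplectic on the smooth locus $U$ of $\spec Z$.

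The one remaining hypothesis of Lemma \ref{L2}, and the step I expect to be the main obstacle, is that the complement of the Azumaya locus has codimension $\ge 2$. Because $Z$ is normal its singular locus already has codimension $\ge 2$, so it suffices to prove that $U$ lies in the Azumaya locus. I would argue this pointwise: at a smooth point $\mathfrak{m}$ where the deformation bracket (equivalently, by Lemma \ref{bracket}, the $B$-bracket) is nondegenerate, nondegeneracy should force the central fibre $A/\mathfrak{m}A$ to be a full matrix algebra of the expected PI-degree $p^{(\dim B)/2}$, whence $A$ is Azumaya at $\mathfrak{m}$. Granting this, all hypotheses of Lemma \ref{L2} hold and the restriction map $HH^1(A)\to\der_{\bold{k}}(Z,Z)$ is injective.

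For the cokernel, note first that any $\bold{k}$-derivation of $A$ preserves the center (if $z\in Z$ then $[D(z),a]=-[z,D(a)]=0$ for all $a$), and inner derivations vanish on $Z$; hence the cokernel of $HH^1(A)\to\der_{\bold{k}}(Z,Z)$ equals the cokernel of the restriction map $\rho\colon\der_{\bold{k}}(A,A)\to\der_{\bold{k}}(Z,Z)$. By normality and $\mathrm{codim}(\spec Z\setminus U)\ge 2$ one has $\der_{\bold{k}}(Z,Z)=\Gamma(U,T_U)$, and contraction with the symplectic form gives a $Z$-linear isomorphism $\mu\colon\Omega^1(U)=\Gamma(U,\Omega^1_U)\xrightarrow{\ \sim\ }\Gamma(U,T_U)=\der_{\bold{k}}(Z,Z)$ sending $dz$ to the Hamiltonian field $\{z,-\}$. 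Now for $z\in Z$ the deformation derivation $\tilde{i}(dz)=\frac{1}{p}\ad(\tilde{z})\bmod p$ is a genuine global element of $\der_{\bold{k}}(A,A)$ whose restriction to $Z$ is exactly $\{z,-\}=\mu(dz)$; since $\der_{\bold{k}}(A,A)$ is a $Z$-module, multiplying by central elements and summing shows that the image of $\rho$ contains $\mu(\Omega^1_Z)$. Therefore the cokernel of $\rho$ is a quotient of $\der_{\bold{k}}(Z,Z)/\mu(\Omega^1_Z)\cong\Omega^1(U)/\Omega^1_Z$, which is the asserted bound. The substance of the statement is thus that, in the absence of a subalgebra lift of $Z$, only the honest Kähler differentials $\Omega^1_Z\subset\Omega^1(U)$ are guaranteed to be realized by global derivations of $A$.
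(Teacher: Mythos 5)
Your strategy coincides with the paper's: injectivity via Lemma \ref{L2} after verifying its hypotheses on the associated graded level (the Frobenius splitting gives Cohen--Macaulayness of $B/B^p$ over $B^p$, normality gives $Ann_{B^p}(B/B^p)=0$, and Lemma \ref{bracket} transports the symplectic condition to the deformation bracket on $Z$), and the cokernel bound via the deformation derivations $\frac{1}{p}\ad(\tilde z)\bmod p$ together with the symplectic identification of $\der_{\mathbf{k}}(Z,Z)$ with $\Omega^1(U)$. Your cokernel paragraph is in substance the paper's argument: the paper's module $P$, the $Z$-span of the derivations $a\lbrace b,-\rbrace$ with $a,b\in Z$, is exactly your $\mu(\Omega^1_Z)$, and your reduction from $HH^1(A)$ to $\der_{\mathbf{k}}(A,A)$ is implicit there. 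These verifications are correct.

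The genuine gap sits exactly where you predicted it: the claim that the complement of the Azumaya locus in $\spec Z$ has codimension $\geq 2$. Your reduction (normality handles the singular locus, so it suffices to show that $U$ lies in the Azumaya locus) is fine, but the pointwise assertion that nondegeneracy of the bracket at a smooth point ``should force'' $A/\mathfrak{m}A$ to be a full matrix algebra of PI-degree $p^{(\dim B)/2}$ is not an argument; it is a restatement of what must be proved, and in characteristic $p$ it is a substantive theorem. The paper does not prove it either: it invokes the external result \cite{T}, whose content is precisely this codimension bound for quantizations with $\gr Z=B^p$. A from-scratch proof of ``symplectic smooth point implies Azumaya point'' needs real machinery (for instance a formal Darboux lemma plus an identification of the completed quantization with a completed Weyl algebra, in the style of Bezrukavnikov--Kaledin, using the $W_2(\mathbf{k})$-lift), none of which appears in your sketch. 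A secondary, smaller issue: you assert the deformation bracket is symplectic on all of $U$, whereas Lemma \ref{bracket} only compares leading terms, and the paper accordingly claims nondegeneracy only on an open subset whose complement has codimension $\geq 2$; your pointwise argument as stated needs nondegeneracy at every point of $U$, so even granting the hard implication it would require this adjustment (harmless for the codimension bound, but it should be said). With \cite{T} cited, or proved, at that decisive step, the rest of your proposal goes through and agrees with the paper's proof.
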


\begin{proof}
It was shown in \cite{T} that the Azumaya locus of $A$ in $\spec Z$ has the compliment of codimension $\geq 2.$
Normality of $B$ implies that $Ann_{B^p}(B/B^p)=0.$
Since $B/B^p$ is a direct summand of $B$, and $B$ is a Cohen-Macaulay $B^p$-module, it follows that $B/B^P$
is a Cohen-Macaulay $B^p$-module of dimension $dim B^p.$ 
Since $\gr(A/Z)=B/B^p$ and $\gr Z=B^p,$
it follows that $ A/Z$ is a Cohen-Macaulay $Z$-module and $Ann_Z(A/Z)=0$.  Now Lemma \ref{bracket} implies that 
the Poisson bracket on $Z$ coming from a lift of  $ A$ over  $ W_2(\bf{k})$ is a deformation
of the Poisson bracket on $B,$ hence it is symplectic on an open subset of $\spec Z$ whose compliment 
has codimension $\geq 2.$ 
Thus all assumptions of Lemma \ref{L2} are satisfied. 

 Denote by $P$ the $Z$-span of derivations of the form $a\lbrace b, -\rbrace, a, b\in Z.$ Clearly $P$ is in the image
 of the restriction $HH^1_{\bold{k}}(A)\to\der_{\bold{k}}(Z,Z).$
Then we have a $Z$-module map $\Omega_Z\to P\subset \der_{\bold{k}}(Z, Z)$ corresponding to the Poisson bracket,
 and $\der_{\bold{k}}(Z, Z)$ can be identified with
$\Gamma( U, \Omega)$ via the symplectic pairing. Hence  $\Omega^1(U)/\Omega^1_Z$ maps onto the cokernel
of the restriction map $HH^1_{\bold{k}}(A)\to\der_{\bold{k}}(Z,Z)$

\end{proof}

This result applies to a large class of algebras including symplectic reflection algebra. Our next result shows that
the restriction map from Theorem \ref{main} is an isomorphism for the case of  central quotients of enveloping algebras of semi-simple Lie algebras. 
Let us recall their definition and fix
the appropriate notations first.


Let $\mathfrak{g}$ be a Lie algebra of a connencted semi-simple simply connected algebraic group $G$ over $\bf{k}$, 
assume that $p$ is large enough relative to $\mathfrak{g}$
(for example $p$ is very good for $G$.)
Let $Z_0\subset Z(\mathfrak{U}\mathfrak{g})$ denote $G$-invariants of  the enveloping algebra $\mathfrak{U}\mathfrak{g}$ 
under the adjoint action of $G$.
Let $\chi:Z_0\to \bf{k}$ be a character. 
Put $\mathfrak{U}_{\chi}\mathfrak{g}=\mathfrak{U}\mathfrak{g}/Ker(\chi)\mathfrak{U}\mathfrak{g}.$
\newpage
\begin{corollary}\label{qow}
Let $A$ be a quotient
enveloping algebra $\mathfrak{U}_{\chi}\mathfrak{g}.$ Let $Z$ be the center of $A.$
Then we have an isomorphism of $Z$-modules $\der_{\bold{k}}(A, A)\cong A/Z\bigoplus \der_{\bold{k}}(Z, Z).$

\end{corollary}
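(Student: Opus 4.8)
The plan is to deduce the splitting $\der_{\bf{k}}(A,A)\cong A/Z\oplus\der_{\bf{k}}(Z,Z)$ from two facts: first, that the restriction map $HH^1(A)\to\der_{\bf{k}}(Z,Z)$ is an \emph{isomorphism}, and second, that the short exact sequence $0\to A/Z\xrightarrow{\ad}\der_{\bf{k}}(A,A)\to HH^1(A)\to 0$ from the introduction splits as $Z$-modules. Granting the first fact, the composite $\der_{\bf{k}}(A,A)\to HH^1(A)\xrightarrow{\sim}\der_{\bf{k}}(Z,Z)$ is exactly the restriction map (inner derivations vanish on $Z$), so a $Z$-linear section of the restriction map is the same as a splitting of the sequence; such a section is produced by Lemma \ref{L1}. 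Thus the whole corollary reduces to verifying the hypotheses of Lemmas \ref{L1} and \ref{main} for $A=\mathfrak{U}_\chi\mathfrak{g}$ and, crucially, to proving \emph{surjectivity} of the restriction map.

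First I would verify the hypotheses. Here $\gr A=\sym\mathfrak{g}=\mathcal{O}(\mathfrak{g}^*)=:B$ with its Kirillov--Kostant bracket; $B$ is a polynomial ring, hence regular (so normal and Cohen--Macaulay) and Frobenius split, and its bracket is symplectic on each coadjoint orbit, in particular on the smooth locus. The algebra $A$ lifts to $W_2(\bf{k})$ by reducing the integral enveloping algebra, and the $p$-center lifts canonically to a central subalgebra of this lift, furnishing the subalgebra lift of $Z$ required by Lemma \ref{L1}; moreover $\gr Z=B^p$ by hypothesis. Hence Theorem \ref{main} applies: the restriction map is injective with cokernel a quotient of $\Omega^1(U)/\Omega^1_Z$, where $U$ is the smooth locus of $\spec Z$. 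It remains to identify $\spec Z$: since $Z$ is generated by the image of the $p$-center while $\chi$ fixes the Harish--Chandra part, $\spec Z$ is (the Frobenius twist of) a fiber of the adjoint quotient $\mathfrak{g}^*\to\mathfrak{g}^*/\!/G\cong\mathfrak{h}^*/W$. By Kostant's theorems these fibers are normal, Cohen--Macaulay complete intersections whose smooth locus $U$ is the unique regular $G$-orbit in the fiber, with complement of codimension $\ge 2$.

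The main step is surjectivity. Because $Z$ is generated as a $\bf{k}$-algebra by the $p$-center elements $\xi_i$, the module $\Omega^1_Z$ is generated by the $d\xi_i$; under the symplectic identification $\der_{\bf{k}}(Z,Z)\cong\Omega^1(U)$ these correspond to the Hamiltonian vector fields $\{\xi_i,-\}$, which lie in the image $P$ of the restriction map. By Lemma \ref{bracket} the leading term of $\{\xi_i,-\}$ is the Kirillov--Kostant Hamiltonian of a linear function on $\mathfrak{g}^*$, i.e. a fundamental vector field of the coadjoint action. Since $U$ is a single $G$-orbit, these fundamental vector fields span every tangent space of $U$, so the $\{\xi_i,-\}$ generate the tangent sheaf $T_U$, and dually the $d\xi_i$ generate $\Omega^1_U$ as a sheaf. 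Thus the cokernel $\Omega^1(U)/\Omega^1_Z$ measures precisely the failure of this generation to descend to global sections.

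This last point is the main obstacle. Passing from generation of the sheaf $T_U$ to generation of $\der_{\bf{k}}(Z,Z)=\Gamma(U,T_U)$ over $Z$ amounts to surjectivity of the map $Z^{\oplus N}\to\Gamma(U,T_U)$ built from the $N=\dim\mathfrak{g}$ fundamental vector fields, whose obstruction lies in $H^1(U,\mathcal{K})$ for $\mathcal{K}$ the kernel bundle of centralizers $\mathfrak{g}_x$ over $U$. One cannot simply invoke codimension: the singular locus of $\spec Z$ may have codimension exactly $2$ (the subregular nilpotents when $\chi=0$), so the relevant local cohomology need not vanish for formal reasons. The plan is to control this obstruction using the finer geometry of the fiber — its normality and Cohen--Macaulayness, which give $\Gamma(U,\mathcal{O})=Z$ and depth bounds, together with the complete-intersection and centralizer-bundle structure — to conclude $\Omega^1(U)=\Omega^1_Z$, so the cokernel vanishes and the restriction map is an isomorphism. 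Combined with the $Z$-section supplied by Lemma \ref{L1}, this yields the asserted decomposition $\der_{\bf{k}}(A,A)\cong A/Z\oplus\der_{\bf{k}}(Z,Z)$.
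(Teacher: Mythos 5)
Your overall skeleton --- injectivity of $HH^1(A)\to\der_{\bold{k}}(Z,Z)$ from Theorem \ref{main} plus a $Z$-linear section of the restriction map from Lemma \ref{L1} --- is indeed how the paper proceeds, but your execution has a logical misstep and two false verifications, and the proof you outline is not complete. The logical point first: you treat surjectivity of $HH^1(A)\to\der_{\bold{k}}(Z,Z)$ as a separate ``main step,'' to be established by showing the cokernel $\Omega^1(U)/\Omega^1_Z$ vanishes via the geometry of the adjoint-quotient fiber, and you never finish that argument (you concede the codimension argument fails and offer only a ``plan'' to control the obstruction in $H^1(U,\mathcal{K})$). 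But this step is unnecessary. Once Lemma \ref{L1} applies, its section $\tau$ satisfies $r\circ\tau=\mathrm{id}$ for the restriction map $r:\der_{\bold{k}}(A,A)\to\der_{\bold{k}}(Z,Z)$, so $r$ is surjective; injectivity of $HH^1(A)\to\der_{\bold{k}}(Z,Z)$ (Theorem \ref{main}) then identifies $\ker r$ with the inner derivations $\ad(A)\cong A/Z$, and the section immediately gives $\der_{\bold{k}}(A,A)\cong A/Z\oplus\der_{\bold{k}}(Z,Z)$. Surjectivity of $HH^1(A)\to\der_{\bold{k}}(Z,Z)$ is a byproduct of this, not an input. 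So the difficulty you declared to be the heart of the matter is a non-issue, while your proposal remains incomplete exactly there.

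The two incorrect verifications are more serious because they sit in the part you did claim to carry out. First, for the central quotient $A=\mathfrak{U}_\chi\mathfrak{g}$ one has $\gr A=\bold{k}[N]$, the coordinate ring of the nilpotent cone (the symbols of the generators of $\ker\chi$ are the fundamental invariants of $\sym \mathfrak{g}$), not $\sym\mathfrak{g}$ itself. This matters: the Kirillov--Kostant bracket on $\mathfrak{g}^*$ has rank at most $\dim\mathfrak{g}-\mathrm{rk}\,\mathfrak{g}$ everywhere, so it is symplectic on no open set, and with your $B$ the symplectic-on-the-smooth-locus hypothesis of Theorem \ref{main} simply fails; the paper needs $N$, whose smooth locus is the regular nilpotent orbit (where the bracket is symplectic), and whose normality, Cohen--Macaulayness and Frobenius splitting are exactly what the citation \cite{BK} supplies. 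Second, your claim that the $p$-center ``lifts canonically to a central subalgebra'' of the $W_2(\bold{k})$-lift is false, and in fact incompatible with your own argument: if $Z$ lifted centrally, then $i(z)=\frac{1}{p}\ad(\tilde z)\bmod p$ would vanish, so the deformation Poisson bracket on $Z$ would be identically zero, contradicting the symplecticity required in Lemma \ref{L1}. The subalgebra lift exists but is genuinely non-central, and constructing it is the real content of the paper's proof: one lifts the $p$-operation to $\tilde{\mathfrak{g}}$, verifies the identity $[x^p-x^{[p]},\,y^p-y^{[p]}]=-p([x,y]^p-[x,y]^{[p]})$, and realizes the lift of $Z_p$ as the image of the algebra map $\mathfrak{U}\tilde{\mathfrak{g}}_1\to\mathfrak{U}\tilde{\mathfrak{g}}$, $x\mapsto x^p-x^{[p]}$, where $\tilde{\mathfrak{g}}_1$ is $\tilde{\mathfrak{g}}$ with bracket rescaled by $-p$. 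Your proposal skips precisely this construction, which is what makes Lemma \ref{L1} (and hence the whole corollary) applicable.
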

\begin{proof}
Let $\tilde{g}$ be a Lie algebra lift of $\mathfrak{g}$ over $W_2(\bold{k})$.
Let $t_1,\cdots, t_n$ be generators of $ker(\chi)$ and 
$\tilde{t_1},\cdots,\tilde{t_n}$ be their lift in $Z(\mathfrak{U}\mathfrak{\tilde{g}}).$
Thus $Z(\mathfrak{U}\mathfrak{g})$ is generated by $t_1,\cdots, t_n$ \\over $Z_p=\lbrace g^p-g^{[p]}, g\in \mathfrak{g}\rbrace$.
Also, $Z$ is the quotient of $Z_p.$
Let $A_2= \mathfrak{U}\mathfrak{\tilde{g}}/(\tilde{t_1},\cdots, \tilde{t_n}).$ So $A_2$ is a lift of $A$ over $W_2(\bf{k}). $
We will show that $Z$ admits an algebra lift in $A_2.$
 Let $[p]:\tilde{\mathfrak{g}}\to \tilde{\mathfrak{g}}$ be a lift of
the restricted structure map $[p]:\mathfrak{g}\to\mathfrak{g}.$ Then if follows from computation in Lemma \ref{bracket} that
$$[x^p-x^{[p]}, y^{p}-y^{[p]}]=-p([x, y]^p-[x, y]^{[p]})\quad  x, y\in \tilde{\mathfrak{g}}.$$ 
Let $\tilde{\mathfrak{g}}_1$  be a $W_2(\bold{k})$-Lie algebra
such that  $\tilde{\mathfrak{g}}_1=\mathfrak{g}$ as $W_2(\bold{k})$-module and the Lie bracket $[x, y]_{\tilde{\mathfrak{g}}_1}$ 
is defined as $-p[x, y]_{\mathfrak{\tilde{g}}}.$
Thus we have an algebra map 
$i:\mathfrak{U}\mathfrak{\tilde{g_1}}\to \mathfrak{U}\mathfrak{\tilde{g}}$ where $i(x)=x^p-x^{[p]}, x\in \mathfrak{\tilde{g}}.$
Denote the image of $i$ by $S.$ Thus $S$ is an algebra lift of $Z_p$ in $\mathfrak{U}\mathfrak{\tilde{g}}.$
 Let $S_1$ denote the image of $S$ under the quotient map
$\mathfrak{U}\mathfrak{\tilde{g}}\to \mathfrak{U}\mathfrak{\tilde{g}}/(\tilde{t_1},\cdots, \tilde{t_n})=A_2.$ Therefore $S'$ is an algebra lift of
$Z$ in $A_2.$

Using  the usual PBW filtration of $A$ we have
$\gr (A)=k[N],$ where $N$ is the nilpotent cone of $\mathfrak{g}.$ Now since $N$ is a Frobenius
split normal Cohen-Macaulay variety \cite{BK}, and the Poisson bracket on the regular locus of $N$ is symplectic, 
Theorem \ref{main} and  Lemma \ref{L1} imply the desired result.

\end{proof}












\begin{remark}
It is known that in characteristic 0 Hochschild cohomology of symplectic reflection algebras $H$ is concentrated in even dimensions \cite{GK},
so it has no outer derivations. The same is true for the enveloping algebras $\mathfrak{U}\mathfrak{g}$ and its quotients.

 \end{remark}

 \noindent\textbf{Acknowledgement:} I am very grateful to Marton Hablicsek for pointing out a serious mistake in the earlier version of the paper.
 
 

\end{document}